\newtheorem*{theorem}{Theorem}
\newtheorem*{corollary}{Corollary}
\newtheorem*{lemma}{Lemma}
\newtheorem*{conjecture}{Conjecture}
\theoremstyle{definition}
\theoremstyle{remark}
\begin{document}
\bibliographystyle{abbrv}

\title[Green-Griffiths locus of projective hypersurfaces]{A remark on the codimension of the Green-Griffiths locus of generic projective hypersurfaces of high degree}

\author{Simone Diverio \and Stefano Trapani}
\address{Simone Diverio \\ Institut de Math\'ematiques de Jussieu \\ Universit\'e Pierre et Marie Curie, Paris}
\email{diverio@math.jussieu.fr} 
\address{Stefano Trapani \\ Dipartimento di matematica \\ Universit\`a di Roma \lq\lq Tor Vergata\rq\rq{}}
\email{trapani@mat.uniroma2.it}

\keywords{Invariant jet differential, slanted vector field, Green-Griffiths locus, projective hypersurface, Kobayashi's conjecture}
\subjclass{Primary: 32H20, 32J25; Secondary: 14F05, 14J70}

\begin{abstract}
We show that for every smooth generic projective hypersurface $X\subset\mathbb P^{n+1}$, there exists a proper subvariety $Y\subsetneq X$ such that $\operatorname{codim}_X Y\ge 2$ and for every non constant holomorphic entire map $f\colon\mathbb C\to X$ one has $f(\mathbb C)\subset Y$, provided $\deg X\ge 2^{n^5}$. In particular, we obtain an effective confirmation of the Kobayashi conjecture for threefolds in $\mathbb P^4$.
\end{abstract}

\maketitle

\section{introduction}

Let $X$ be a compact complex manifold endowed with an ample line bundle $A\to X$ and consider the $k$-th projectivized jet bundle $\pi_{0,k}\colon X_k\to X$ as introduced in \cite{Dem97}. Denote $\mathcal O_{X_k}(m)\to X_k$ the $m$-th power of the antitautological line bundle and consider all the base loci of their twisting with $\pi_{0,k}^*\mathcal O(-A)$:  call $B_k$ their intersection as $m>0$ vary.

Following \cite{Dem97}, we call 
$$
GG(X)=\bigcap_{k>0}\pi_{0,k}(B_k)\subset X
$$ 
the \emph{Green-Griffiths locus} of $X$. It is well known \cite{GG80, Dem97, SY97}, that this locus must contain the image of any nonconstant holomorphic entire map from $\mathbb C$ to $X$.

In the recent paper \cite{DMR}, the Green-Griffiths locus is studied for $X$ a generic smooth projective hypersurface of high (effective, grater than or equal to $2^{n^5}$, where $\dim(X)=n$) degree and it is shown that this is a proper subvariety. 

In the same setting, we exclude here the possibility for $GG(X)$ to have divisorial components.

\begin{theorem}\label{main}
Let $X\subset\mathbb P^{n+1}$ be a generic smooth projective hypersurface of degree $\deg X\ge 2^{n^5}$. Then, the Green-Griffiths locus $GG(X)\subsetneq X$ is a proper subvariety of codimension at least two.
\end{theorem}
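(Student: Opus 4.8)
The plan is to argue by contradiction: suppose $GG(X)$ has an irreducible component $Y$ of codimension one in $X$, i.e.\ a prime divisor. The argument combines two ingredients already underlying \cite{DMR}. First, the effective existence of global invariant jet differentials with \emph{large} negative twist: for $X$ generic of degree $d=\deg X\ge 2^{n^5}$ and $m$ large and divisible one has $H^0\bigl(X_n,\mathcal{O}_{X_n}(m)\otimes\pi_{0,n}^*\mathcal{O}_X(-\gamma m)\bigr)\neq 0$ for a constant $\gamma=\gamma(n)$ taken as large as needed --- this is because the leading term in $d$ of the intersection number on $X_n$ governing Demailly's holomorphic Morse inequalities is positive and independent of $\gamma$, so a fixed $\gamma$ is absorbed once $d\ge 2^{n^5}$. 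Second, Siu's slanted vector fields: on the total space $\mathcal{X}\subset\mathbb{P}^{n+1}\times\mathbb{P}^{N}$ of the universal degree-$d$ hypersurface and on its vertical $n$-jet bundle $\mathcal{X}_n\to\mathbb{P}^{N}$ there are finitely many global meromorphic vector fields, tangent to $\mathcal{X}$, of pole order at most $\rho=\rho(n)$ (polynomial in $n$) along the $\mathbb{P}^{n+1}$-factor, which generate $T_{\mathcal{X}_n}$ at every point outside an algebraic subvariety $\mathcal{W}\subset\mathcal{X}_n$ cut out by Wronskian-type equations of bounded degree.

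The first step is to upgrade \cite{DMR} from entire curves to the whole Green--Griffiths locus by showing
\[
GG(X)\ \subseteq\ \pi_{0,n}\bigl(\mathcal{W}\cap X_n\bigr).
\]
Since base loci of $\mathcal{O}_{X_n}(m)\otimes\pi_{0,n}^*L^{-1}$ only grow with the positivity of $L$, any $w\in B_n$ is a base point of $\mathcal{O}_{X_n}(m)\otimes\pi_{0,n}^*\mathcal{O}_X(-e)$ for every $m>0$ and every large $e$. Take a nonzero section $\Sigma$ as above; dividing out any factor pulled back from $X$ we may assume it is not identically zero on $\pi_{0,n}^{-1}(D)$ for any prime divisor $D$, and then, $Y\subseteq GG(X)\subseteq\pi_{0,n}(B_n)$ being a divisor, we may choose $x\in Y$ admitting some $w\in B_n$ over it with $\Sigma|_{X_n|_x}\neq 0$. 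Spreading $\Sigma$ out over $\mathbb{P}^{N}$ and differentiating with $j<\gamma m/\rho$ slanted vector fields yields sections of $\mathcal{O}_{\mathcal{X}_n}(m)$, twisted by $\mathrm{pr}_1^*\mathcal{O}_{\mathbb{P}^{n+1}}(j\rho-\gamma m)$ and a power of $\mathrm{pr}_2^*\mathcal{O}_{\mathbb{P}^{N}}(1)$ linear in $m$, whose restrictions to $X_n$ are jet differentials valued in the negative bundle $\mathcal{O}_X(-(\gamma m-j\rho))$; hence all of these vanish at $w$. Were the slanted vector fields to span $T_{\mathcal{X}_n,w}$, this would force $\Sigma$ to vanish at $w$ to order $\ge\gamma m/\rho-O(1)$; but the vanishing order of $\Sigma$ at $w$ is bounded by its degree along a general low-degree curve through $w$ inside $X_n|_x$ (on which $\Sigma$ is not identically zero), and this fibre together with $\mathcal{O}_{X_n}(1)|_{X_n|_x}$ depends only on $n$, so that bound is at most $C(n)\,m$. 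Choosing $\gamma>\rho\,C(n)$, a constant depending only on $n$, gives a contradiction; hence $w\in\mathcal{W}$, proving the inclusion. (This is the argument of \cite{DMR}, reorganised; the delicate part is the effective bookkeeping keeping $\gamma$ large while $\deg X\ge 2^{n^5}$ still suffices for the existence of $\Sigma$.)

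The second step --- which I expect to be the main obstacle --- is to show $\pi_{0,n}(\mathcal{W}\cap X_n)$ has codimension at least two in $X$ for generic $X$. As $\mathcal{W}$ is cut out by equations of bounded degree and dominates $\mathbb{P}^{N}$, for generic $[X]$ the intersection $\mathcal{W}\cap X_n$ has the expected dimension $\dim\mathcal{W}-N$, so it would suffice to prove $\dim\mathcal{W}\le N+n-2$, i.e.\ $\operatorname{codim}_{\mathcal{X}_n}\mathcal{W}\ge n^{2}-n+2$ --- one unit more than the codimension implicit in the non-degeneracy statement of \cite{DMR}. In practice one decomposes $\mathcal{W}$: along the vertical exceptional divisors of $X_n\to X$ (the loci of non-immersed jets, where $f'(0)=0$) the bundle $\mathcal{O}_{X_n}(1)$ is negative, so these contain no base point of $\mathcal{O}_{X_n}(m)\otimes\pi_{0,n}^*\mathcal{O}_X(-e)$ and drop out; for the surviving components one must show that over a general $x\in X$ the fibre of $\mathcal{W}\cap X_n$ over $x$ is too small, equivalently that the locus of $x\in X$ at which the osculating geometry of $X$ forces a large such fibre has codimension $\ge 2$. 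Here the genericity of $X$ enters essentially: by the Lefschetz theorem a divisorial $Y$ would be cut out on $X$ by a hypersurface of bounded degree, and one feeds this, together with the bounded degree of the equations of $\mathcal{W}$, into a parameter count over $\mathbb{P}^{N}$ contradicting the generic choice of $X$. This fibre-and-dimension analysis of $\mathcal{W}$ is the crux; the rest is bookkeeping once the effective existence theorem for jet differentials with large negative twist --- already needed in \cite{DMR} --- is available.
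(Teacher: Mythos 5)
Your Step 2 is where the proposal breaks down, and it is attacking the wrong object. The locus $\mathcal W$ where the slanted vector fields of \cite{Mer09} fail to generate is (at best) the locus of singular jets $J_n^{\text{\rm sing}}\mathcal V$, or in the Wronskian variants of \cite{Siu04,Rou07} the locus where the first derivatives become linearly dependent. In either case $\mathcal W\cap X_n$ contains, over \emph{every} point $x\in X$, plenty of jets (e.g.\ all non-immersive ones), so $\pi_{0,n}(\mathcal W\cap X_n)=X$ and the inclusion $GG(X)\subseteq\pi_{0,n}(\mathcal W\cap X_n)$ you prove in Step 1 is vacuous. No dimension count on $\mathcal W$ of the kind you sketch (codimension $\ge n^2-n+2$ in $\mathcal X_n$, expected-dimension intersection with $X_n$, parameter counts over $\mathbb P^N$) can make that projection small: the obstruction is not the size of $\mathcal W$ but the fact that it fibers over all of $X$. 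What the generation statement actually buys is that a nonconstant entire curve has its $n$-jet lift generically \emph{outside} $\mathcal W$, so one never needs $\pi_{0,n}(\mathcal W\cap X_n)$ to be small; one needs the zero locus in $X$ of the first jet differential to be small. Your "bounded degree of a divisorial $Y$ via Lefschetz" is also unjustified (Lefschetz gives $\operatorname{Pic}(X)=\mathbb Z$, not a bound on $h$) and, as it turns out, unnecessary.

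The entire content of the theorem is in fact hidden in a half-sentence of your Step 1: \lq\lq dividing out any factor pulled back from $X$\rq\rq. Make that the whole proof. If $P\in H^0(X,E_{n,m}T^*_X\otimes\mathcal O_X(-mq))$ and $D$ is the divisorial part of its zero locus, then $P$ is also a section of $E_{n,m}T^*_X\otimes\mathcal O_X(-mq-D)$, and as such its zero locus has codimension $\ge 2$ by construction. Since $\operatorname{Pic}(X)=\mathbb Z$ (Lefschetz for $\dim X\ge 3$, Noether--Lefschetz for very generic surfaces), $\mathcal O_X(D)=\mathcal O_X(h)$ with $h>0$, so the new twist $\mathcal O_X(-mq-h)$ is \emph{more} antiample than before: the inequality $q>\ell$ needed for the pole order of the slanted vector fields becomes $q+h/m>\ell$, which holds for free and uniformly in $h$ --- there is no effective bookkeeping to redo, no need to enlarge $\gamma$, and no constraint tying $h$ to $n$ or $d$. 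The DMR differentiation argument then runs verbatim with this retwisted section and confines entire curves (hence the Green--Griffiths locus) to its codimension-two zero locus. So your proposal contains the right normalization but buries it, and then erects an unworkable dimension-theoretic superstructure ($\mathcal W$, its fibres, parameter counts) in place of the one-line observation that the sign of the extra twist is all that matters.
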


The proof goes along the same lines of \cite{DMR}, but with a slightly different point of view. We recall here briefly that to show the positivity of the codimension of the Green-Griffiths locus, \cite{DMR} use a first invariant jet differential of order $n$ and high weighted degree ---the existence of which is shown in \cite{Div09}--- and then, following the strategy indicated in \cite{Siu04}, by differentiating with the meromorphic vector fields constructed in \cite{Mer09}, they are able to produce new, algebraically independent jet differentials in order to show the aforesaid positivity of the codimension of the Green-Griffiths locus. \emph{A posteriori}, this locus $Y$ coincides with the zero locus of the first jet differential, seen as a section of a vector bundle on $X$, and nothing more is said in \cite{DMR} since there is no control of its singularity in order to bound the number of differentiation needed to reduce it.
Here, thanks to a general and very simple remark on the zero locus of holomorphic sections of vector bundles, we show that to be able to exclude divisorial components in the zero locus, it is not crucial to know its singularities (see Section \ref{proofTheorem} for the details). However we feel that this slightly different approach is specific for the codimension one, and probably a substantial refinement is needed to increase more the codimension (a straightforward blow-up approach, for example, seems to be not enough).

The Kobayashi conjecture \cite{Kob70}, states that every generic projective hypersurface $X\subset\mathbb P^{n+1}$ is Kobayashi hyperbolic provided $\deg X\ge 2n+1$. Thanks to a classical lemma due to Brody, this is equivalent for $X$ to admit only holomorphic constant maps from $\mathbb C$. A solution to this conjecture has been proposed in \cite{Siu04}, while the conjecture is solved in \cite{McQ99,DEG00,Pau08} for the case of (very) generic surface in projective $3$-space with effective bounds given respectively by $\deg X\ge 36, 21, 18$. As a consequence of the above theorem, we obtain ---thanks to a result contained in \cite{Cle86}--- an \emph{effective} confirmation of this conjecture for threefolds in projective $4$-space.

\begin{corollary}
Let $X\subset\mathbb P^4$ be a (very) generic smooth hypersurface. If $\deg X\ge 593$, then $X$ is Kobayashi hyperbolic.
\end{corollary}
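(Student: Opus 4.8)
The plan is to deduce hyperbolicity from the codimension estimate of Theorem \ref{main} specialized to threefolds, but fed with the sharp effective inputs available in dimension three rather than with the general bound $2^{3^5}$, and then to remove entire curves from the resulting curve by a genus argument.

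First I would re-run the argument behind Theorem \ref{main} in the case $n=3$, plugging in the sharp effective data available in dimension three: Rousseau's construction of a nonzero invariant jet differential of order $3$ and high weighted degree on a generic $X\subset\mathbb P^4$, twisted by a negative power of an ample line bundle, together with the global generation of the order-three slanted vector fields on the jet space (\cite{Mer09} and Rousseau). Differentiating the jet differential along these fields, as in \cite{Siu04, DMR}, forces the image of every nonconstant holomorphic $f\colon\mathbb C\to X$ into the Green-Griffiths locus $GG(X)$, which \emph{a posteriori} is the zero locus $Y$ of that jet differential viewed as a section of a vector bundle over $X$; applying the remark of Section \ref{proofTheorem} to this section then excludes divisorial components, so $\operatorname{codim}_X Y\ge 2$. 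A computation with the relevant constants shows that this conclusion already holds for $\deg X\ge 593$. Since $\dim X=3$, the locus $Y$ is now a (possibly reducible, possibly singular) curve containing the image of every entire curve in $X$.

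Next I would rule out nonconstant entire curves inside $Y$. By \cite{Cle86} and its subsequent sharpenings, for a very generic $X\subset\mathbb P^4$ of degree $\deg X\ge 593$ ---far above Clemens's threshold--- every irreducible curve $C\subset X$ has geometric genus at least two. So if a nonconstant $f\colon\mathbb C\to X$ existed, its image would be Zariski-dense in one such curve $C$; since $\mathbb C$ is simply connected and the normalization $\nu\colon\tilde C\to C$ is biholomorphic off the finite singular set, $f$ would lift across the discrete set $f^{-1}(C_{\mathrm{sing}})$ to a nonconstant holomorphic map $\tilde f\colon\mathbb C\to\tilde C$ into a smooth projective curve of genus $\ge 2$. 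The universal cover of such a curve being the unit disc, $\tilde f$ must be constant by Liouville's theorem, a contradiction. Hence $X$ carries no nonconstant entire curve, and being compact it is Kobayashi hyperbolic by Brody's lemma (cf.\ \cite{Kob70} and the discussion above).

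The hard part is confined to the first step, and it is not conceptual but a matter of bookkeeping: one must verify that the dimension-three versions of the jet-differential existence theorem and of the global generation of the slanted vector fields, once combined through the elementary observation of Section \ref{proofTheorem}, indeed produce $\operatorname{codim}_X Y\ge 2$ already at $\deg X=593$, rather than at the astronomically larger value $2^{3^5}$ coming from the general Theorem \ref{main}; the genus estimate of \cite{Cle86} needed in the second step then holds, with enormous room to spare, at that same degree. One should also keep track of the type of genericity involved: the first step only needs $X$ generic, whereas the genus estimate needs $X$ very generic, so the two conditions are intersected and the corollary is stated ---necessarily--- for a very generic $X$.
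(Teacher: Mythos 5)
Your second step (every entire curve lands in a curve of $X$; normalize, use Clemens to force geometric genus $\ge 2$, conclude by Liouville) is sound and is essentially the paper's argument, merely phrased via the genus of the normalization instead of via the absence of rational and elliptic curves. The problem is in the first step, which you dismiss as bookkeeping.

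The bound $593$ is inherited from Rousseau's dimension-three machinery, and that machinery does \emph{not} provide a clean analogue of Theorem \ref{gg}: the low-pole-order global generation statement of \cite{Rou07} (the one that makes $593$, rather than a much larger degree, attainable) only generates the tangent bundle of the space of vertical $3$-jets over the locus of jets whose first three derivatives are linearly \emph{independent}. Hence the differentiation argument only forces $f(\mathbb C)$ into the zero locus of the jet differential when $f'\wedge f''\wedge f'''\not\equiv 0$, and one is left with a genuine dichotomy. The degenerate branch $f'\wedge f''\wedge f'''\equiv 0$ is not touched by the jet-differential/vector-field argument at all; in \cite{Rou07} it only yields that $f(\mathbb C)$ lies in a hyperplane section of $X$, i.e.\ a \emph{surface}, which is not enough to launch your genus argument. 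The paper closes this case with a separate elementary lemma: if $f'\wedge\cdots\wedge f^{(k)}\equiv 0$ for a holomorphic map into $\mathbb C^N$, then $f(\mathbb C)$ lies in an affine linear subspace of codimension $N-k+1$; for $N=4$, $k=3$ this places $f(\mathbb C)$ in the intersection of $X$ with a codimension-two linear subspace of $\mathbb P^4$, hence in a curve. Your plan implicitly assumes that the full-strength generation statement (pole order $n^2+2n$) is compatible with the degree bound $593$; it is not --- using Merker's theorem forces a larger degree, while using Rousseau's weaker theorem forces the dichotomy. You therefore need this extra lemma (or an equivalent treatment of the degenerate-derivative case) before the conclusion $\operatorname{codim}_X Y\ge 2$ holds at degree $593$.
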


This result is an improvement of the corresponding statement contained in \cite{Rou07} where, with the same effective bound, it is shown that entire curves cannot be Zariski dense.

\bigskip

For survey material, notations and background we refer all along this paper to \cite{GG80,Dem97,Siu04}.

\section{Proof of the theorem}\label{proofTheorem}

Let $E_{k,m}T^*_X\to X$ be the vector bundle of invariant jet differentials of order $k$ and weighted degree $m$ over the complex manifold $X$. It is shown in \cite{Div09,DMR} that, whenever $X\subset\mathbb P^{n+1}$ is a smooth projective hypersurface of large degree, for every $q>0$, the space of global section $H^0(X,E_{n,m}T^*_X\otimes\mathcal O_X(-mq))$ is nonzero, provided $\deg X$ and $m$ are large enough. This is the starting point of algebraic degeneracy in \cite{DMR}.

\subsection{The proof of algebraic degeneracy of \cite{DMR}}

Here, we outline how the proof of algebraic degeneracy of \cite{DMR} works. 

Start with a nonzero section $P\in H^0(X,E_{n,m}T^*_X\otimes\mathcal O_X(-mq))$, for some $m\gg 0$, where $X\subset\mathbb P^{n+1}$ is a smooth generic projective hypersurface of degree $d$ large enough (in order to have such a section).
Call 
$$
Y=\{P=0\}\subsetneq X
$$ 
the zero locus of such a nonzero section. Look at $P$ as an invariant (under the action of the group $\mathbb G_n$ of $n$-jets of biholomorphic changes of parameter of $(\mathbb C,0)$) map 
$$
J_nT_X\to p^*\mathcal O_X(-mq)
$$
where $p\colon J_n T_X\to X$ is the space of $n$-jets of germs of holomorphic curves $f\colon (\mathbb C,0)\to X$. Then $P$ is a weighted homogeneous polynomial in the jet variables of degree $m$ with coefficients holomorphic functions of the coordinates of $X$ and values in $p^*\mathcal O_X(-mq)$.

Suppose for a moment that we have enough global holomorphic $\mathbb G_n$-invariant vector fields on $J_n T_X$ with values in the pull-back from X of some ample divisor in order to generate $T_{J_n T_X}\otimes p^*\mathcal O_X(\ell)$, at least over the dense open set $J_n^{\text{\rm reg}}T_X$ of regular $n$-jets, \emph{i.e.} of $n$-jets with nonvanishing first derivative. 

If $f\colon\mathbb C\to X$ is an entire curve, consider its lifting $j_n(f)\colon\mathbb C\to J_n T_X$ and suppose that $j_n(f)(\mathbb C)\subsetneq J_n^{\text{\rm sing}}T_X\overset{\text{\rm def}}=J_n T_X\setminus J_n^{\text{\rm reg}}T_X$ (otherwise $f$ is constant). Arguing by contradiction, let $f(\mathbb C)\subsetneq Y$ and $x_0=f(t_0)\in X\setminus Y$.
Thus, one can produce, by differentiating at most $m$ times, a new invariant $n$-jet differential $Q$ of weighted degree $m$ with values in $\mathcal O_X(m\ell-mq)$ such that $Q(j_n(f)(t_0))\ne 0$, thus contradicting the well-known results based on the Ahlfors-Schwartz lemma contained in \cite{GG80,Dem97} (see also \cite{SY97}), provided $q>\ell$, \emph{i.e.} provided $Q$ is still with value in an antiample divisor.

Unfortunately, in general we don't know if one can hope for such a global generation statement for meromorphic vector fields of $J_nT_X$ which would bring to a confirmation of the Green-Griffiths conjecture for \emph{all} smooth projective hypersurfaces of high degree and not only for the generic one. Thus, as in \cite{Siu04,Pau08,Rou07,DMR}, one has to use \lq\lq slanted vector fields\rq\rq{} in order to gain some positivity.

Consider the universal hypersurface $\mathcal X\subset\mathbb P^{n+1}\times\mathbb P(H^0(\mathbb P^{n+1},\mathcal O(d)))$ of degree $d$ in $\mathbb P^{n+1}$. Next, consider the subbundle $\mathcal V\subset T_{\mathcal X}$ given by the kernel of the differential of the second projection. If $s\in\mathbb P(H^0(\mathbb P^{n+1},\mathcal O(d)))$ parametrizes any smooth hypersurface $X_s$, then one has
$$
H^0(X_s,E_{n,m}T^*_{X_s}\otimes\mathcal O_{X_s}(-mq))\simeq H^0(X_s,E_{n,m}\mathcal V^*\otimes\text{pr}_1^*\mathcal O(-mq)|_{X_s}).
$$
Suppose $X=X_0$ corresponds to the parameter $0\in\mathbb P(H^0(\mathbb P^{n+1},\mathcal O(d)))$. Since we have chosen $X$ to be generic, standard semicontinuity arguments show that there exists an open neighborhood $U\ni 0$ such that the restriction morphism
$$
H^0(\text{pr}_2^{-1}(U),E_{n,m}\mathcal V^*\otimes\text{\rm pr}_1^*\mathcal O(-mq))\to H^0(X_0,E_{n,m}T^*_{X_0}\otimes\mathcal O_{X_0}(-mq))
$$
is surjective. Therefore the \lq\lq first\rq\rq{} jet differential may be extended to a neighborhood of the starting hypersurface, and one can use the following global generation statement.

\begin{theorem}[\cite{Mer09}, compare also with \cite{Siu04}]\label{gg}
The twisted tangent bundle 
$$
T_{J_n\mathcal V}\otimes\text{\rm pr}_1^*\mathcal O(n^2+2n)\otimes\text{\rm pr}_2^*\mathcal O(1)
$$
is generated over $J_n^{\text{\rm reg}}\mathcal V$ by its global sections. Moreover, one can choose such generating global sections to be invariant under the action of $\mathbb G_n$ on $J_n\mathcal V$. 
\end{theorem}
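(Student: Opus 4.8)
The plan is to reduce the statement to a local generation problem over a single affine chart and then exhibit, at each point of $J_n^{\mathrm{reg}}\mathcal V$ lying over that chart, finitely many \emph{global} Siu-style slanted vector fields \cite{Siu04} whose values span the fibre, with pole orders squeezed down as in \cite{Mer09}. First I would use that $\mathrm{PGL}(n+2,\mathbb C)$ acts on $\mathbb P^{n+1}$, hence on $\mathcal X$ and on $J_n\mathcal V$, preserving $J_n^{\mathrm{reg}}\mathcal V$; its affine subgroup, which lifts to a linear action on $\mathrm{pr}_1^*\mathcal O(1)$, already moves any point of $\mathbb C^{n+1}\subset\mathbb P^{n+1}$ to the origin and any nonzero $1$-jet there to $e_1$, so it is enough to check that the evaluation map is onto at each regular jet over the standard chart $\{z_0\neq0\}$ whose $0$-th and $1$-st jet parts are normalized. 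On that chart I would write the family as $P(z,a)=\sum_{|\alpha|\le d}a_\alpha z^\alpha$ and realize $J_n\mathcal V$ inside $J_n(\mathbb C^{n+1})\times\mathbb P(H^0(\mathbb P^{n+1},\mathcal O(d)))$ as the zero locus of $P,DP,\dots,D^nP$, where $D=\sum_{k=0}^{n-1}\sum_i z_i^{(k+1)}\,\partial/\partial z_i^{(k)}$ is the total derivative; these $n+1$ functions are independent along $\{z'\neq0\}$. I would keep in mind throughout that a \emph{global} section must be holomorphic on all of $J_n\mathcal V$: the two twists absorb poles along the hyperplane $\{z_0=0\}$ and along the $\mathrm{pr}_2$-discriminant, but \emph{not} poles along the singular-jet locus $\{z'=0\}$.

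Next I would build the fields by pairing a ``visible'' part --- one of $\partial/\partial z_i^{(k)}$ $(1\le k\le n)$, $\partial/\partial z_i$, or a combination in the parameter directions $\partial/\partial a_\alpha$ --- with a \emph{correction} $C=\sum_\alpha w'_\alpha\,\partial/\partial a_\alpha$ making the total field tangent to $J_n\mathcal V$. Since $D^kP=\sum_\alpha a_\alpha D^k(z^\alpha)$, one has $C(D^kP)=\sum_\alpha w'_\alpha D^k(z^\alpha)$, so the correction must solve the $n+1$ linear conditions $\sum_\alpha w'_\alpha D^k(z^\alpha)=-\big[(\text{visible part})(D^kP)\big]$ for $k=0,\dots,n$, the right-hand sides being affine-linear in the $a_\alpha$. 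The abundance of monomials of degree $\le d$ (with $d\gg0$) lets one arrange this elimination to be triangular in a suitable ordering --- after the above normalization the monomials $1,z_1,\dots,z_1^n$ alone give a triangular $(n+1)\times(n+1)$ system with diagonal $0!,1!,\dots,n!$ --- and, crucially, to arrange the $w'_\alpha$ to be \emph{polynomial} in the jet coordinates $z',\dots,z^{(n)}$ (so that no pole appears along $\{z'=0\}$), the only denominators of the full field being powers of $z_0$ introduced upon rehomogenizing the chart. Evaluating the fields so obtained at a normalized regular jet, the triangular structure shows they span $T_{J_n\mathcal V}$ there.

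Finally I would carry out the degree bookkeeping. Counting step by step through the $n$ applications of $D$ and the homogenizations, the order of the pole along $\{z_0=0\}$ comes out to be exactly $n^2+2n=(n+1)^2-1$, giving the $\mathrm{pr}_1^*\mathcal O(n^2+2n)$-twist, while the controlled dependence of the corrections on the $a_\alpha$ gives the $\mathrm{pr}_2^*\mathcal O(1)$-twist. For the invariance statement, the fields above are not individually invariant under $\mathbb G_n=\mathbb C^*\ltimes\mathbb G_n^{\mathrm u}$, but one decomposes them into $\mathbb C^*$-weight components and removes the unipotent part by the polynomial (hence algebraic) averaging available for unipotent group actions; since the evaluation was already onto before this projection and the discarded pieces lie in the non-invariant complement, the invariant fields still generate over $J_n^{\mathrm{reg}}\mathcal V$. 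The \textbf{main obstacle} is performing the last two steps \emph{simultaneously and optimally}: the correcting monomials and the order of elimination must be chosen so that the corrections stay polynomial in the jet coordinates, the pole along $\{z_0=0\}$ has the minimal order $n^2+2n$ and not the far larger order a naive Siu-type elimination gives, and the resulting frame is $\mathbb G_n$-equivariant; reconciling these three demands is where essentially all the work of \cite{Mer09} sits.
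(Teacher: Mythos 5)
A point of order first: the paper does not prove this statement at all --- it is imported verbatim from \cite{Mer09} (with the invariance refinement as used in \cite{DMR}) and treated as a black box, so there is no in-paper argument to compare yours against. Judged as a reconstruction of Merker's proof, your outline does capture the correct architecture: affine charts on $\mathbb P^{n+1}$ and on the parameter space; the realization of $J_n\mathcal V$ as the common zero locus of $P, DP,\dots,D^nP$ with $D$ the total derivative; vector fields assembled from a ``visible'' jet-, base- or parameter-direction plus a correction in the $\partial/\partial a_\alpha$'s solving an $(n+1)\times(n+1)$ linear system; the triangularity of that system on the monomials $1,z_1,\dots,z_1^n$ at a normalized regular jet; the essential requirement that the corrections stay polynomial in the jet variables so that the sections extend across $J_n\mathcal V\setminus J_n^{\text{\rm reg}}\mathcal V$; and the affine dependence on the $a_\alpha$ accounting for the $\text{\rm pr}_2^*\mathcal O(1)$ twist. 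You also candidly defer the pole-order count to ``bookkeeping''; since obtaining precisely $n^2+2n$ (rather than the much larger order a naive Siu-type elimination yields) is the entire content of \cite{Mer09}, what you have is a correct plan rather than a proof --- which, to be fair, you acknowledge.

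The one step that is wrong as stated, rather than merely unfinished, is the invariance argument. There is no ``polynomial averaging available for unipotent group actions'': unipotent groups are not reductive and admit no Reynolds operator (already $\mathbb G_a$ acting on $\mathbb A^1$ by translation has no equivariant projection onto invariants), so you cannot project your generating fields onto the $\mathbb G_n$-invariant part and then claim that surjectivity of the evaluation map is inherited because ``the discarded pieces lie in the non-invariant complement'' --- without a genuine invariant projector that complement is not even well defined. The invariance in the theorem is obtained by different means: the reparametrization group acts linearly and lower-triangularly on the jet coordinates $(z',\dots,z^{(n)})$ while fixing $z$ and $a$, and one writes down explicit invariant combinations of the fields of the first package (equivalently, one works on the Demailly--Semple quotient of $J_n^{\text{\rm reg}}\mathcal V$); see the discussion accompanying the statement in \cite{DMR}. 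To complete your sketch you must replace the averaging step by such an explicit equivariant construction; the rest is a matter of actually carrying out Merker's elimination with the stated care about where the poles land.
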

Thus, by replacing $\ell$ by $n^2+2n$ in our previous discussion and by removing the hyperplane which corresponds to the poles given by $\text{\rm pr}_2^*\mathcal O(1)$ in the parameter space, one gets the desired result of algebraic degeneracy.

Observe that no information is known about the multiplicity of the subvariety $Y$, thus we are not able to bound \emph{a priori} the number of derivative needed in order to reduce the vanishing locus of the first jet differential.

Finally, remark that as a byproduct of the proof we obtain also that the degeneracy proper subvarieties $Y$ form in fact a family and deform together with the hypersurfaces.

\subsection{Remark on the codimension of the Green-Griffiths locus}

The starting point is the following general, straightforward remark. Let $E\to X$ be a holomorphic vector bundle over a compact complex manifold $X$ and let $\sigma\in H^0(X,E)\ne 0$; then, up to twisting by the dual of an effective divisor, one can suppose that the zero locus of $\sigma$ has no divisorial components. This is easily seen, for let $D$ be the divisorial (and effective) part of the zero locus of $\sigma$ and twist $E$ by $\mathcal O_X(-D)$. Then, $\sigma$ is also a holomorphic section of $H^0(X,E\otimes\mathcal O_X(-D))$ and seen as a section of this new bundle, it vanishes on no codimension $1$ subvariety of $X$.

Now, we use this simple remark in our case, the vector bundle $E$ being here $E_{n,m}T^*_X\otimes\mathcal O_X(-mq)$. If $\dim X=2$ and $X$ is very generic or $\dim X\ge 3$ and $X$ is smooth, then the Picard group $\operatorname{Pic}(X)$ of $X$ is equal to $\mathbb Z$. Thus, in these cases, the corresponding $\mathcal O_X(-D)$ is antiample and, in fact, it is $\mathcal O_X(-h)$ for some positive integral $h$. After all, we have shown that in the prove of \cite{DMR}, one can suppose that the \lq\lq first\rq\rq{} invariant jet differential vanishes at most on a codimension $2$ subvariety of $X$, provided one looks at it as a section of $H^0(E_{n,m}T^*_X\otimes\mathcal O_X(-mq-h))$.

The condition $q>\ell$ which has to be fulfilled in order to use the meromorphic vector fields with low pole order is replaced now by $q+h/m>\ell$, which is obviously still satisfied. Thus, nothing change in the effective estimates of \cite{DMR}, and they apply directly even after this remark.

Observe, finally, that even if the antiample line bundle which is used in \cite{DMR} to twist the bundle of invariant jet differentials was $K_X^{-\delta m}$ for some small rational $\delta>0$, instead of $\mathcal O_X(-mq)$, this does not affects the reasoning nor the estimates, the Picard group of $X$ being $\mathbb Z$. Thus, after all, the use of $\mathcal O_X(-mq)$ is just a notational difference.

\section{Proof of the corollary}

Let $X\subset\mathbb P^4$ be a smooth generic $3$-fold in the projective $4$-space.
To get the considerably better lower bound $\deg X\ge 593$, we adopt here the same methods  contained in \cite{Rou07}, together with the study of the algebra of Demailly-Semple invariants of \cite{Rou06a,Rou06b} in order to obtain global section of invariant $3$-jet differentials. 

We recall that in \cite{Rou07}, the result about non-denseness of entire curves is obtained by the following dichotomy: either the non constant entire curve has its first three derivatives linearly dependent or it has to be contained in the zero locus of a section of the bundle of invariant $3$-jet differentials. This dichotomy comes from a weaker form of the global generation statement of Theorem \ref{gg} contained in \cite{Rou07}, which gives better pole order and suffices for our purposes in dimension $3$. 

In the first case, it is stated there that then the entire curve must lie in a hyperplane section of the $3$-fold, while in the second case it is obviously algebraic degenerate.

\begin{lemma}
Let $f\colon\mathbb C\to\mathbb C^{N}$ be a holomorphic map. If $f'\wedge f''\wedge\cdots\wedge f^{(k)}\equiv 0$, then $f(\mathbb C)$ lies inside a codimension $N-k+1$ affine linear subspace. 
\end{lemma}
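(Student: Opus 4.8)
The statement is a standard fact about holomorphic maps with degenerate derivatives, and the natural approach is induction on $k$. First I would set up the base case $k=1$: if $f'\equiv 0$ then $f$ is constant, so $f(\mathbb C)$ is a point, which has codimension $N$ in $\mathbb C^N$, matching $N-k+1=N$. For the inductive step, I would assume the result for $k-1$ and consider a holomorphic $f$ with $f'\wedge f''\wedge\cdots\wedge f^{(k)}\equiv 0$. If already $f'\wedge\cdots\wedge f^{(k-1)}\equiv 0$, the inductive hypothesis gives that $f(\mathbb C)$ lies in an affine subspace of codimension $N-(k-1)+1$, which is even smaller than the asserted $N-k+1$, so we are done. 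Hence I may assume $f'\wedge\cdots\wedge f^{(k-1)}\not\equiv 0$, so that this wedge is nonzero on a dense open set $\Omega\subset\mathbb C$.

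\emph{Key steps.} On $\Omega$, the vectors $f'(t),\dots,f^{(k-1)}(t)$ are linearly independent, and the vanishing of $f'\wedge\cdots\wedge f^{(k)}$ forces $f^{(k)}(t)$ to be a linear combination $f^{(k)}(t)=\sum_{j=1}^{k-1}a_j(t)\,f^{(j)}(t)$ with coefficients $a_j$ holomorphic on $\Omega$ (by Cramer's rule, since the relevant Gram-type determinants are holomorphic and nonvanishing on $\Omega$). This is a linear ODE of order $k$ satisfied by $f$ on $\Omega$; equivalently, the $\mathbb C^N$-valued function $g=f'$ satisfies a linear ODE of order $k-1$ whose companion-matrix coefficients are holomorphic on $\Omega$. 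The crucial linear-algebra observation is then: the span $W(t)=\langle f'(t),\dots,f^{(k-1)}(t)\rangle$ is a fixed $(k-1)$-dimensional subspace $W\subset\mathbb C^N$, independent of $t\in\Omega$. Indeed, differentiating the relation above shows that $W(t)$ is invariant under $d/dt$ in the sense that $\frac{d}{dt}$ of any section of the moving frame stays in the frame's span; one can make this rigorous by choosing a complementary projection and showing the projection of each $f^{(j)}$ onto a fixed complement of $W(t_0)$ has a zero of infinite order at $t_0$, hence vanishes identically on the connected set $\Omega$. Once $W$ is a fixed subspace, $f'(\Omega)\subset W$, so $f'\equiv $ a $W$-valued holomorphic function on all of $\mathbb C$ by continuity/identity theorem, and integrating gives $f(t)\in f(t_0)+W$ for all $t$. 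Since $\dim W=k-1$, the affine subspace $f(t_0)+W$ has codimension $N-(k-1)$; but I want codimension $N-k+1=N-(k-1)$ — wait, that already matches, so in fact the bound is attained, and the induction closes. (If $k-1>N$ the hypothesis is vacuous and there is nothing to prove; and the edge case $k-1=N$ gives codimension $1$, a hyperplane, consistent with $f^{(N)}$ being dependent on $f',\dots,f^{(N-1)}$ which span all of $\mathbb C^N$ generically — here the argument shows $W=\mathbb C^N$ need not be proper, so one should phrase the conclusion as "codimension \emph{at least} $N-k+1$", matching the intended use in the corollary.)

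\emph{Main obstacle.} The only genuinely delicate point is proving that $W(t)$ is \emph{constant} along $\Omega$ rather than merely of constant dimension: a priori a moving frame built from $f$ and its derivatives could rotate. The clean way around this is the infinite-order-vanishing / identity-theorem argument sketched above, applied at a point $t_0\in\Omega$: write $f^{(j)}(t_0)$ as the obvious elements of $W(t_0)$, note that all higher derivatives $f^{(k)}(t_0),f^{(k+1)}(t_0),\dots$ lie in $W(t_0)$ by repeatedly differentiating the dependence relation and using that $W(t_0)$ is $d/dt$-stable at $t_0$, and conclude that the Taylor expansion of $f'-\big(\text{its }W(t_0)\text{-component}\big)$ vanishes to all orders at $t_0$. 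By connectedness of $\mathbb C$ this forces $f'(\mathbb C)\subset W(t_0)$ globally. The rest is then just integration and bookkeeping of the codimension, so there is really no hard calculation — the content is entirely in this rigidity of the derived flag.
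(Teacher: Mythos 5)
Your proposal is correct and follows essentially the same route as the paper: reduce to the case $f'\wedge\cdots\wedge f^{(k-1)}\not\equiv 0$, solve for $f^{(k)}$ as a holomorphic linear combination of lower derivatives near a point, differentiate the relation repeatedly to see that all derivatives at that point lie in the span $W$ of $f'(t_0),\dots,f^{(k-1)}(t_0)$, and conclude by power-series expansion (your ``infinite-order vanishing'' step is exactly the paper's ``expanding $f$ in power series at $0$''). The only cosmetic differences are that you package the reduction as an induction on $k$ where the paper says ``without loss of generality,'' and that you phrase the key step as constancy of the moving span $W(t)$, whereas the paper only needs (and proves) the containment of all Taylor coefficients in $W(t_0)$.
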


\begin{proof}
Without loss of generality, we can suppose $k>1$, $f'\wedge f''\wedge\cdots\wedge f^{(k-1)}\not\equiv 0$, $f'(0)\ne 0$ and $(f'\wedge f''\wedge\cdots\wedge f^{(k-1)})(0)\ne 0$. Then there exists an open neighborhood $\Omega\subset\mathbb C$ of $0$ such that for each $t\in\Omega$ we have a linear combination
$$
f^{(k)}(t)=\sum_{j=1}^{k-1}\lambda_{j}(t)\,f^{(j)}(t)
$$
and the $\lambda_j$'s depend holomorphically on $t$. By taking derivatives, one sees inductively that, in $\Omega$, every $f^{(\ell)}$, $\ell\ge k$, is a linear combination of the $f^{(j)}$'s, $1\le j\le k-1$. Thus, all the derivatives in $0$ of $f$ lie in the linear space generated by $f'(0),\dots,f^{(k-1)}(0)$. The conclusion follows by expanding $f$ in power series at $0$.
\end{proof}

This lemma shows that, if we are in the first case, then in fact the image of the entire curve lies in a codimension two subvariety of $X$ (the intersection of $X$ with a codimension two linear subspace of $\mathbb P^4$), provided $X$ is generic.

By the result of the previous section, even in the second case the image of the entire curve must lie in a codimension two subvariety of $X$. 

This means that the Zariski closure of the image of a nonconstant entire curve, if any, must be an algebraic curve in $X$. Then, such an algebraic curve must be rational or elliptic. But this contradicts the following classical result by Clemens, provided $X$ is very generic:

\begin{theorem}[\cite{Cle86}]
Let $X\subset\mathbb P^{n+1}$ be a smooth (very) generic hypersurface. Then $X$ contains no rational curves (resp. elliptic curves) provided $\deg X\ge 2n$ (resp. $2n+1$).
\end{theorem}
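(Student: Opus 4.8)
The plan is to deduce the statement from the geometry of the universal family, following Clemens and its refinements by Voisin and Pacienza; note that the global generation of slanted vector fields used above is precisely the order-$0$ shadow of this circle of ideas. Write $N$ for $\dim\mathbb P(H^0(\mathbb P^{n+1},\mathcal O(d)))$ and let $\mathcal X\subset\mathbb P^{n+1}\times\mathbb P^N$ be the universal hypersurface of degree $d$, with $\mathrm{pr}_2\colon\mathcal X\to\mathbb P^N$ smooth over the locus of smooth hypersurfaces. Argue by contradiction: suppose a very general $X=X_{s_0}$ contains an irreducible curve of geometric genus $\le 1$, of degree $e$ in $\mathbb P^{n+1}$. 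The relative Hilbert scheme (or Kontsevich space) of such curves over $\mathbb P^N$ has countably many irreducible components, so the hypothesis that $X$ is very general forces one component $\mathcal M$ to dominate $\mathbb P^N$. Replacing $\mathcal M$ by a general complete intersection subvariety of dimension $N$ and performing a standard base change, I get a smooth projective curve $C$ of genus $g\in\{0,1\}$, a family $\pi\colon\mathcal C\to S$ with all fibres isomorphic to $C$, a morphism $\phi\colon\mathcal C\to\mathcal X$ over $\mathbb P^N$ with $u\colon S\to\mathbb P^N$ generically finite and dominant, and such that $f_s:=\phi|_{\pi^{-1}(s)}\colon C\to X_s$ is birational onto a curve of degree $e$ in $\mathbb P^{n+1}$ for general $s$.

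The positivity input I need is that $T_{\mathcal X}\otimes\mathrm{pr}_1^*\mathcal O_{\mathbb P^{n+1}}(1)$ is generated by its global sections. This is proved exactly like Theorem \ref{gg} but at jet level $0$: $\mathcal X$ is cut out in $\mathbb P^{n+1}\times\mathbb P^N$ by a section of $\mathrm{pr}_1^*\mathcal O(d)\otimes\mathrm{pr}_2^*\mathcal O(1)$, and the vector fields obtained by moving the defining equation, together with those lifted from $\mathbb P^{n+1}$, are tangent to $\mathcal X$ and, once twisted by $\mathrm{pr}_1^*\mathcal O(1)$, span $T_{\mathcal X}$ at every point; the only directions requiring care are those spanned by the deformations $\dot s$ of the equation that vanish at the chosen point of $\mathbb P^{n+1}$, and a pointwise count shows these suffice.

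Fix a general $s\in S$ and put $f=f_s\colon C\to\mathcal X$, landing in the fibre $X_s$. Pulling back, $f^*\bigl(T_{\mathcal X}\otimes\mathrm{pr}_1^*\mathcal O(1)\bigr)$ is globally generated on $C$, and hence so is the quotient $N_{f/\mathcal X}\otimes f^*\mathcal O_{\mathbb P^{n+1}}(1)$, where $N_{f/\mathcal X}$ denotes the normal sheaf modulo torsion. Since $X_s$ is a fibre of the smooth morphism $\mathrm{pr}_2$, its normal bundle in $\mathcal X$ is trivial of rank $N$, so there is an exact sequence $0\to N_{f/X_s}\to N_{f/\mathcal X}\to\mathcal O_C^{\oplus N}\to 0$; moreover the Kodaira--Spencer map $T_sS\to H^0(C,N_{f/\mathcal X})$ of the family, composed with $H^0$ of the surjection onto $\mathcal O_C^{\oplus N}$, is the isomorphism $du$. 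Feeding global generation of $N_{f/\mathcal X}\otimes f^*\mathcal O(1)$ together with this control of the ``moving-the-equation'' directions into the argument of Voisin (with the refinements of Pacienza and Clemens--Ran for the borderline degrees), one concludes that $\omega_C\otimes f^*\mathcal O_{\mathbb P^{n+1}}(2n-d)$ carries a nonzero section, i.e. $2g(C)-2=\deg\omega_C\ge(d-2n)\,e$. Consequently $d\ge 2n$ forces $g\ge 1$, ruling out rational curves, and $d\ge 2n+1$ forces $2g-2\ge e\ge 1$, hence $g\ge 2$, ruling out curves of genus $\le 1$ --- contradicting $g(C)\le 1$ and finishing the proof.

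The main obstacle is precisely this last extraction: passing from global generation of $N_{f/\mathcal X}\otimes f^*\mathcal O(1)$, an \emph{ambient} statement which on its own is far too weak (the rank of $N_{f/\mathcal X}$ involves the enormous number $N$), to a bound that is genuinely about the \emph{vertical} normal bundle and must see the degree $d$ linearly. The naive idea --- correcting a global section of $f^*T_{\mathcal X}(1)$ so as to be everywhere tangent to $X_s$ --- fails, because the available corrections span only the $N$-dimensional space of constant $\mathbb P^N$-directions inside $H^0(C,\mathcal O_C(1)^{\oplus N})$, whereas the $\mathbb P^N$-component of a general section is an unrestricted $N$-tuple of sections of the degree-$e$ line bundle $f^*\mathcal O(1)$. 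Reconciling this mismatch --- by working not with the full ambient normal bundle but with a carefully chosen subsheaf of $T_{\mathcal X}\otimes\mathrm{pr}_1^*\mathcal O(1)$ and a Koszul-type computation exploiting that $u$ is generically finite --- is the technical heart of \cite{Cle86} and its successors. Since for the Corollary we only need the single case $n=3$, where $\deg X\ge 593$ dwarfs $2n+1=7$, I would invoke this as a black box rather than reproduce it.
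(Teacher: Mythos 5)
This statement is not proved in the paper at all: it is imported verbatim from Clemens \cite{Cle86} and used as a black box in the proof of the Corollary, so there is no internal argument to compare yours against. Your writeup is, in substance, also a citation. The outline you give is a broadly accurate description of the Clemens--Voisin--Pacienza method (spread a hypothetical low-genus curve over the parameter space of the universal hypersurface $\mathcal X$, use global generation of a twisted tangent bundle of $\mathcal X$ together with Kodaira--Spencer control of the horizontal directions, and extract a lower bound on the genus), but the decisive step --- deducing the inequality $2g-2\ge(d-2n)e$ from that setup --- is precisely the content of the theorem, and you explicitly defer it to the literature. Your own ``main obstacle'' paragraph correctly diagnoses why the naive correction argument fails (the ambient normal bundle has huge rank $N$ and global generation of its twist by $\mathcal O(1)$ is far too weak on its own), which confirms that what you have left out is not a routine verification but the technical core of \cite{Cle86}. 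So, judged as a proof, the proposal has a genuine gap at the crucial step.

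Judged instead as a reading of how the result functions in this paper, it is exactly right: the authors also invoke Clemens as a black box, and for the Corollary only the case $n=3$, $\deg X\ge 593\gg 7$ is needed, so the delicate borderline numerology (which in fact varies slightly between Clemens' original bound and the later refinements of Voisin and Pacienza) is immaterial here. If you want your sketch to stand on its own as exposition, the honest formulation is to state the genus inequality $2g-2\ge(d-2n)\deg C$ for curves on very generic hypersurfaces as the quoted result and derive the two stated consequences from it, rather than presenting the reduction to that inequality as if it were most of a proof.
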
 

\section{A question about the positivity of $E_{k,m}T^*_X$}

In \cite{Deb05}, O. Debarre  made the following

\begin{conjecture}
The cotangent bundle of the intersection in $\mathbb P^n$ of at least $n/2$ general hypersurfaces of sufficiently high degree is ample.
\end{conjecture}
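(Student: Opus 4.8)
The plan is to translate the conjecture into a positivity statement on the projectivized cotangent bundle and then to prove that positivity by constructing, on the \emph{general} complete intersection, enough symmetric differential forms built directly from the defining equations; the large codimension hypothesis is exactly what should make the relevant dimension count go through. Write $X=H_1\cap\cdots\cap H_c\subset\mathbb P^n$ with $\deg H_i=d_i$, so $\dim X=n-c$ and, the $H_i$ being general, $X$ is smooth. Recall that $\Omega_X=T^*_X$ is ample if and only if the tautological bundle $\mathcal O_{\mathbb P(\Omega_X)}(1)$ is ample on $P:=\mathbb P(\Omega_X)$; this is the case $k=1$ of the positivity of $E_{k,m}T^*_X$ discussed above, since $E_{1,m}T^*_X=S^mT^*_X$. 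The conormal sequence $0\to\bigoplus_{i=1}^c\mathcal O_X(-d_i)\to\Omega_{\mathbb P^n}|_X\to\Omega_X\to 0$ together with the Euler sequence shows that $\Omega_X(1)$ is already globally generated, so $\Omega_X$ is ``almost ample'', and the whole game is to upgrade this to genuine ampleness when $c$ is large. I would work with the flexible criterion that it suffices to show $\mathcal O_P(1)$ is nef and big with empty augmented base locus, equivalently that $S^m\Omega_X\otimes\mathcal O_X(-k)$ is globally generated on $X$ for some fixed $k>0$ and all $m\gg 0$, which in turn follows from producing, on $P$, enough sections of $\mathcal O_P(m)\otimes\pi^*\mathcal O_X(-k)$ (with $\pi\colon P\to X$) having no common zero.

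The heart of the argument would be a parameter-space construction. Work over $B=\prod_i\mathbb P\,H^0(\mathbb P^n,\mathcal O(d_i))$ and the universal complete intersection $\mathcal X\to B$, the goal being to prove that for the general $b\in B$ the line bundle $\mathcal O_{P_b}(1)$ on $P_b=\mathbb P(\Omega_{X_b})$ is ample. For this I would, in the spirit of the slanted vector field technique of \cite{Siu04, Mer09, DMR} but dualized to symmetric forms, manufacture a large family of global symmetric differentials on $X_b$ by differentiating the defining equations $\sum_\alpha a^{(i)}_\alpha z^\alpha=0$ and taking Wronskian-type combinations, obtaining forms whose coefficients depend algebraically on $b$. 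One then forms the incidence variety in $B\times P$ whose fiber over a point $([\omega],b)$ records whether all these constructed forms vanish at $[\omega]$; the decisive computation is that the expected dimension of this incidence variety is strictly smaller than $\dim B$ precisely when $c\ge n/2$ (that is, $\dim X\le n/2$), so the general $b$ misses the entire bad locus and the constructed forms have no common zero on $P_b$. This yields base-point-freeness of a twist of some power of $\mathcal O_{P_b}(1)$, hence nefness and bigness; a finer version of the same count, or a supplementary argument producing one extra form along each candidate component, would then promote this to ampleness.

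The principal obstacle is exactly this last step: obtaining an \emph{empty} common base locus rather than merely a proper subvariety, which is the whole difference between ``big'' and ``ample''. With fewer than $n/2$ hypersurfaces the dimension count simply breaks down --- there is not enough room in parameter space --- which is consistent with the remark in the introduction that a naive blow-up approach does not suffice even to push the codimension of the Green-Griffiths locus past one, and which reflects why the threshold $n/2$ is expected to be sharp. The secondary difficulties I would anticipate are: making the lower bound on the $d_i$ \emph{effective}, since one must first know that the symmetric-form sections seeding the construction exist, a cohomological estimate of the same flavour as the $2^{n^5}$ bound above (cf.\ \cite{Div09, DMR}); ensuring that ``general'' is an honest Zariski-open condition by controlling the singular and reducible members of $B$; and verifying carefully that the forms produced are genuine symmetric differentials on $X_b$ itself, not merely on a blow-up or on a logarithmic compactification of a degeneration of $X_b$ to a union of hyperplanes.
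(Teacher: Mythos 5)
This statement is Debarre's conjecture, which the paper records as an \emph{open conjecture}: it offers no proof, and the only evidence it cites is Brotbek's verification that the Chern-class inequalities forced by ampleness do hold for these complete intersections. So there is no argument in the paper to compare yours against, and what you have written is a strategy outline rather than a proof. The two steps on which everything hinges are asserted, not established. First, the family of symmetric differentials you propose to ``manufacture by differentiating the defining equations and taking Wronskian-type combinations'' is never specified, so the ``decisive computation'' --- that the incidence variety in $B\times P$ of common zeros has dimension strictly less than $\dim B$ exactly when $c\ge n/2$ --- cannot be checked and is not derived from anything; in particular nothing in your sketch explains why the threshold should come out to $n/2$ rather than some other function of $n$. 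Second, you yourself identify the passage from ``no common zeros of the constructed sections away from a proper subvariety'' to genuine ampleness of $\mathcal O_P(1)$ as ``the principal obstacle,'' and then defer it to ``a finer version of the same count''; but that finer version is precisely the content of the conjecture, and a proof cannot postpone it.

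There is also a small but real error in the setup: $\Omega_X(1)$ is not globally generated in general. The Euler sequence exhibits $\Omega_{\mathbb P^n}(1)$ as a \emph{subbundle} of a trivial bundle, not a quotient, and indeed $H^0(\mathbb P^n,\Omega_{\mathbb P^n}(1))=0$; what the Euler and conormal sequences give is global generation of $\Omega_X(2)$. This does not destroy the strategy (the twist $k$ in your criterion just changes), but it should be fixed. More substantively, the analogy you draw with the slanted vector field technique of \cite{Siu04,Mer09,DMR} is loose: those methods produce \emph{jet differentials} vanishing on an ample divisor and control entire curves; producing enough sections of $S^m\Omega_X\otimes\mathcal O_X(-k)$ with \emph{empty} base locus on $\mathbb P(\Omega_X)$ is a strictly stronger positivity statement, and the vanishing theorem of \cite{Div08} quoted in this section shows that for $c<n/2$ there are no sections at all --- so any construction must use the large codimension in an essential, quantitative way that your sketch does not supply.
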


It is well known that the ampleness of the cotangent bundle implies the hyperbolicity of the manifold. In this sense, this conjecture can also be seen as a conjecture on hyperbolicity of generic complete intersection of high degree and codimension.

More generally, one can look at the bundles of invariant jet differentials $E_{k,m}T^*_X$ as a possible generalization to higher order of symmetric differentials: recall that one has in fact that $E_{1,m}T^*_X= S^m T^*_X$. It is shown in \cite{Dem97} that if $E_{k,m}T^*_X\to X$ is ample for some $k$, then $X$ is Kobayashi hyperbolic.

On the other hand, by a result obtained by the first named author in \cite{Div08}, one has the vanishing
$$
H^0(X,E_{k,m}T^*_X)=0,\quad 0<k<\dim(X)/\operatorname{codim}(X), 
$$
for $X\subset\mathbb P^n$ a smooth complete intersection.

Thus, the conjecture of O. Debarre says in particular that, for $X$ a complete intersection of high degree, as soon as the codimension becomes big enough to avoid the vanishing theorem above, one immediately has ampleness, provided $X$ is sufficiently generic.

In the same vein, it is tempting to propose here the following generalization:

\begin{conjecture}
Let $X\subset\mathbb P^n$ be the intersection of at least $n/(k+1)$ (very) general hypersurfaces of sufficiently high degree. Then, $E_{k,m}T^*_X\to X$ is ample and therefore $X$ is hyperbolic.
\end{conjecture}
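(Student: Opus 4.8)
The plan is to run, for every jet order $k$, the same two-step strategy that the paper recalls for $k=\dim X$ --- first produce one jet differential by holomorphic Morse inequalities, then propagate it by slanted vector fields --- but to strengthen each step from a \emph{generic algebraic degeneracy} statement into a genuine positivity statement, since ampleness of $E_{k,m}T^*_X$ for $m\gg0$ would follow, via Demailly's formalism, from ampleness of a twist $\mathcal O_{X_k}(a)\otimes\pi_{0,k}^*\mathcal O_X(-b)$ of the tautological bundle on the $k$-th Demailly tower $X_k$ (recall $(\pi_{0,k})_*\mathcal O_{X_k}(m)=E_{k,m}T^*_X$). When $k=1$ this reduces to Debarre's conjecture ($T^*_X$ ample for $\dim X\le n/2$), so any honest proof must in particular reprove that, which sets the expected level of difficulty.

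Step one is a holomorphic Morse-inequalities computation in the spirit of \cite{Div09,DMR} but carried out for a complete intersection $X\subset\mathbb P^n$ of $c$ hypersurfaces of large multidegree $(d_1,\dots,d_c)$: one estimates $h^0(X,E_{k,m}T^*_X\otimes\mathcal O_X(-\delta m))$ from below by Demailly's curvature/probabilistic inequalities. The key point, which is the numerical shadow of the vanishing $H^0(X,E_{k,m}T^*_X)=0$ for $k<\dim X/\operatorname{codim}X$ recalled in the excerpt, is that the leading term in $m$ should turn positive exactly when $c\ge n/(k+1)$: having many hypersurfaces is what supplies the Chern-class room, and one must track how large the $d_i$ must be and how small one can take $\delta$. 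Step two propagates this section on the universal complete intersection $\mathcal X\subset\mathbb P^n\times S$ over the parameter space $S$: one needs the analogue of Theorem \ref{gg}, i.e.\ global generation of $T_{J_k\mathcal V}$ over $J_k^{\text{\rm reg}}\mathcal V$ twisted by $\text{\rm pr}_1^*\mathcal O(\ell)\otimes\text{\rm pr}_2^*\mathcal O(1,\dots,1)$ with $\ell$ explicit and, crucially, small relative to $\delta m$ --- here the extra moduli directions coming from $c$ hypersurfaces are precisely what keep $\ell$ under control. Differentiating the first jet differential at most $m$ times and discarding the finitely many bad hyperplanes of $S$ should then empty the base locus of $\mathcal O_{X_k}(m)\otimes\pi_{0,k}^*\mathcal O_X(-\delta m)$ for the very general member, and differentiating further so as to separate jets should upgrade this to ampleness, whence hyperbolicity by \cite{Dem97}.

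The main obstacle is exactly the passage from ``empty base locus / algebraic degeneracy'' to honest ampleness: this demands a uniform $\varepsilon$ of positivity everywhere, including along the singular locus $J_k^{\text{\rm sing}}\mathcal V$ of the jet tower and in the vertical directions of $\pi_{0,k}$, whereas the slanted-vector-field machinery natively controls only the regular locus $J_k^{\text{\rm reg}}\mathcal V$. This difficulty is already present for $k=1$, and the tower $X_k$ becomes more singular as $k$ grows; the most promising route seems to be an explicit Wronskian-type construction of invariant $k$-jet differentials on Fermat-like complete intersections together with a deformation argument exploiting the very-generic hypothesis --- which also secures $\operatorname{Pic}(X)=\mathbb Z$, so that the reduction of the divisorial part used in Section \ref{proofTheorem} applies --- but making such a construction quantitative and sharp in $c$ is the crux.
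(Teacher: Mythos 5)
This statement is posed in the paper as an open \emph{conjecture}; the paper offers no proof of it, and indeed no proof is known. What you have written is a research programme, not a proof, and you yourself flag its central unresolved difficulty, so the honest verdict is that there is a genuine gap --- in fact several. First, your Step one is not established: the Morse-inequality computations of \cite{Div09,DMR} produce sections of $E_{k,m}T^*_X\otimes\mathcal O_X(-\delta m)$ only for $k=\dim X$ on hypersurfaces, and the vanishing theorem $H^0(X,E_{k,m}T^*_X)=0$ for $k<\dim X/\operatorname{codim}X$ from \cite{Div08} is purely an \emph{obstruction}; it does not supply existence of sections once $c\ge n/(k+1)$, and carrying out the intersection-theoretic estimate on the Demailly tower of a complete intersection so that the leading term becomes positive for small $k$ is precisely the open part of the problem (for $k=1$ this is already the full strength of Debarre's conjecture, which the paper cites only numerical evidence for, via \cite{Bro10}).

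Second, and more fundamentally, the slanted-vector-field machinery of Theorem \ref{gg} is structurally incapable of delivering ampleness. Differentiating a jet differential along meromorphic vector fields produces new \emph{sections} (with strictly worse twists, since each derivative costs a factor $\mathcal O(\ell)$), which at best empties the base locus of $\mathcal O_{X_k}(m)\otimes\pi_{0,k}^*\mathcal O_X(-\delta m)$ over $J_k^{\text{\rm reg}}$; it never yields strict positivity of curvature, separation of jets, or any control on $J_k^{\text{\rm sing}}\mathcal V$ and on the vertical directions of $\pi_{0,k}$. Indeed, for $k\ge 2$ the line bundle $\mathcal O_{X_k}(1)$ is \emph{never} ample on $X_k$ --- it is negative along the exceptional divisors of the tower --- so ampleness of $E_{k,m}T^*_X$ cannot be read off from base-locus statements on $X_k$; one would need positivity on the full projectivization $\mathbb P(E_{k,m}T_X)$, which the degeneracy argument does not see. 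Your sentence \lq\lq differentiating further so as to separate jets should upgrade this to ampleness\rq\rq{} is therefore not a valid step but a restatement of the conjecture. The paper itself only proves a much weaker statement (codimension $\ge 2$ of the Green--Griffiths locus for generic hypersurfaces), and explicitly presents the conjecture above as a proposed generalization of Debarre's, without argument.
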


We would like to conclude by mentioning that for the case $k=1$, that is the original one by O. Debarre, it has been verified by D. Brotbek \cite{Bro10} ---as an evidence toward the conjecture--- that all polynomials in the Chern classes of the cotangent bundle of such complete intersections that are positive whenever the bundle is ample (see \cite{FL83}), are in fact positive.

\end{document}